\newtheorem{proposition}{Proposition}
\newtheorem{theorem}[proposition]{Theorem}
\newtheorem{lemma}[proposition]{Lemma}
\newtheorem{corollary}[proposition]{Corollary}
\theoremstyle{definition}
\newtheorem*{remark}{Remark}
\newtheorem*{example}{Example}
\def\CC{{\Bbb C}}
\title{Parametric solutions of Pell equations}
\author{Leonardo Zapponi}
\date{\today}
\keywords{Pell's equation, Chebyshev polynomials, dessins d'enfants, fundamental units in real quadratic fields.}
\email{leonardo.zapponi@imj-prg.fr}
\begin{document}

\maketitle

\begin{abstract} This short paper is concerned with polynomial Pell equations
\[P^2-DQ^2=1,\]
with $P,Q,D\in\Bbb C[X]$ and $\deg(D)=2$. The main result shows that the polynomials $P$ and $Q$ are closely related to Chebyshev polynomials. We then investigate the existence of such polynomials in $\Bbb Z[X]$ specializing to fixed solutions of ordinary Pell equations over the integers.
\end{abstract}

\section*{Introduction}

This paper is a digestet, condensed and updated version of a discussion recently held in the mathematical forum Mathoverflow~\cite{K}. The original and motivating question, asked by Stefan Kohl, can be summarized as follows:

\vskip.4cm

\begin{center}\begin{minipage}[c]{.92\textwidth}
{\it\hskip.4cm Let $n$ be a positive integer which is not a perfect square and consider a fundamental solution $(a,b)$ of the Pell equation
\[x^2-ny^2=1.\]
Is it always possible to construct polynomials $P,Q,D\in\Bbb Z[X]$, with $\deg(D)=2$, and an integer $k$, such that
\[P^2-DQ^2=1,\]
with $P(k)=a, Q(k)=b$ and $D(k)=n$? In case of positive answer, is it possible to bound the degree of $P$?}
\end{minipage}
\end{center}

\vskip.4cm

Numerical evidences suggested that given a fundamental solution of the Pell equation, the polynomials $P,Q$ and $D$ can always be constructed and that the degree of $P$ is at most $6$. David Speyer posted an explicit family with $P$ of degree $1$ and the author did the same for $P$ of degree $2$, leading to a positive answer to the first question. In a second post, the author was able to prove that, if the integer $n$ is square-free and not congruent to $1$ modulo $4$ then the degree of $P$ is at most $2$. The general case which was proved subsequently and just announced on the forum confirms the numerical evidences as soon as we suppose that $n$ is square-free.

The paper is organized as follows: in the first section, we study the general polynomial Pell equation
\[P^2-DQ^2=1,\]
with $D$ of degree $2$. The key result of the paper is theorem~\ref{th1}, which states that $P,Q$ and $D$ can be explicitly computed and that they are closely related to Chebyshev polynomials. One of the main ingredients of the proof comes from Grothendieck's theory of dessins d'enfants. The first two paragraphs of the second section describe all the families of polynomials $P,Q$ and $D$ satisfying the conditions in Stefan Kohl's question, provided that the degree of $P$ is less than or equal to $2$. More precisely, for $P$ of degree $1$, proposition~\ref{prop1} provides essentially the same family obtained by David Speyer, while for $P$ of degree $2$, proposition~\ref{prop2} gives a refined version of the family posted by the author. The last paragraph is concerned with the possible degrees of $P$. The main result is theorem~\ref{th2} which asserts that for a fixed solution $(a,b)$, the degree is bounded by an explicit constant depending on $(a,b)$ and corollary~\ref{cor1} establishes that without any restriction on the integer $n$, there is no uniform bound. Nevertheless, if $n$ is square-free, corollary~\ref{cor2} claims that the degree is bounded by $6$.

\section*{Aknowledgments}

The author would like to thank Stefan Kohl for asking the question which inspired this work and for the many comments and suggestions posted on the forum. A special thank goes to David Speyer, reading his post and the following comments leaded to a better understanding of the problem. Finally, the amount of anonimous readers consulting the post (more than a thousand, counted with multiplicity) has been a motivating implulse for the redaction of this note; a last thank is adressed to them.

\section{Chebyshev polynomials}

Let $K=\Bbb C(X)$ denote the field of rational functions over the complex numbers. The set of sequences $(u_n)$ of elements of $K$ satisfying the relation
\[u_{n+1}=2Xu_n-u_{n-1}\]
is a $K$-vector space of dimension $2$, spanned by the sequences $(T_n)$ and $(U_n)$ defined by
\[\begin{cases}
T_0=1,\\
T_1=X,
\end{cases}\qquad\mbox{and}\qquad\begin{cases}
U_0=1,\\
U_1=2X.
\end{cases}\]

For any non-negative integer $n$, the polynomials $T_n,U_n\in\Bbb Z[X]$ are respectively called {\it Chebyshev polynomials of the first and second kind of degree} $n$. It is easily checked that in the field $L=K(\sqrt{X^2-1})$, they satisfy the identity
\[\left(X+\sqrt{X^2-1}\right)^n=T_n+U_{n-1}\sqrt{X^2-1},\]
which can be considered as an alternative definition. In particular, the {\it Pell relation}
\begin{equation}\label{eq1}
T_n^2-(X^2-1)U_{n-1}^2=1
\end{equation}
holds. This last identity will be the main ingredient in the following sections.

From a geometric point of view, the polynomial $T_n$ induces a cover of the projective line unramified outside $\infty,1$ and $-1$. It follows that the morphism $f_n:\Bbb P^1\to\Bbb P^1$ associated to the polynomial
\[f=\frac12(T_n+1)\]
is a {\it Belyi map}. Its ramification data of depends on the parity od $n$. In any case, it is totally ramified above $\infty$. If $n$ is even then all the points above $1$ have ramification index $2$ and all the points above $1$ have ramification index $2$ expcepted two of them, which are unramified. If $n$ is odd then the ramification behaviour above $0$ and $1$ is the same: all the points have ramification index $2$ excepted one of them, which is unramified. Grothendieck's theory of {\it dessins d'enfants} provides an elegant combinatorial description of the isomorphism classes of Belyi maps. We will not enter into the details, but it follows that given a fixed positive integer $n$, there is a unique dessin d'enfant corresponding to the above ramification data (see the picture below). We refer to the book~\cite{D} for a complete introduction to the subject.

\vskip.4cm
\begin{figure}[htbp]
\begin{center}
\includegraphics[scale=.3]{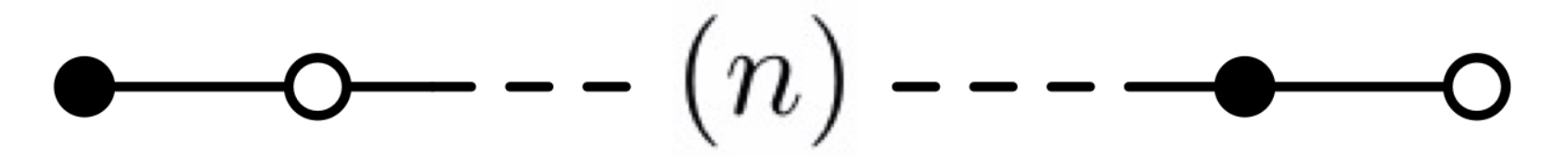}
\caption{The dessin d'enfant associated to $T_n$.}
\end{center}
\end{figure}

\section{Polynomial Pell equations}

We now describe the solutions of Pell equation
\[P^2-DQ^2=1,\]
with $P,Q,D\in\Bbb C[X]$, in the special case where $D$ has degree $2$. The following result shows that the polynomials $P$ and $Q$ are closely related to Chebyshev polynomials.

\begin{theorem}\label{th1} Let $P,Q,D\in\CC[X]$ with $\deg(D)=2$ and $\deg(P)=d$. The following conditions are equivalent:
\begin{enumerate}
\item We have the identity
\[P^2-DQ^2=1.\]
\item There exist constants $\lambda,\mu\in\Bbb C^\times$ and $\nu\in\Bbb C$ such that
\[\begin{cases}
P=\pm T_d(\lambda X+\nu),\\
Q=\mu U_{d-1}(\lambda X+\nu),\\
D=\mu^{-2}\left((\lambda X+\nu)^2-1\right).
\end{cases}\]
\end{enumerate}
\end{theorem}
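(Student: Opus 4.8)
The implication $(2)\Rightarrow(1)$ is a direct verification: substituting $X\mapsto\lambda X+\nu$ in the Pell relation~\eqref{eq1} gives
\[T_d(\lambda X+\nu)^2-\bigl((\lambda X+\nu)^2-1\bigr)U_{d-1}(\lambda X+\nu)^2=1,\]
and the three displayed formulas turn the left-hand side into $P^2-DQ^2$.

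For $(1)\Rightarrow(2)$ I would start by normalising. We may assume $d\geq1$ (if $d=0$ then $Q=0$ and $P=\pm1=\pm T_0$). Then $D$ must be squarefree: a double root of $D$ would give $P^2-1=L^2Q^2$ with $L$ linear, so that $(P-LQ)(P+LQ)=1$ forces both factors, hence $P$, to be constant. Since $D$ is squarefree of degree $2$, there are $\lambda,\mu\in\CC^\times$ and $\nu\in\CC$ with $D=\mu^{-2}\bigl((\lambda X+\nu)^2-1\bigr)$; performing the change of variable $Y=\lambda X+\nu$ (and renaming $Y$ back to $X$) reduces the statement to the following: if $A,B\in\CC[X]$ satisfy $A^2-(X^2-1)B^2=1$ with $\deg A=d\geq1$, then $A=\pm T_d$ and $B=\pm U_{d-1}$. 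Indeed, from this one recovers $P=\pm T_d(\lambda X+\nu)$ and $Q=\pm\mu U_{d-1}(\lambda X+\nu)$, the sign of $B$ being absorbed into $\mu$.

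To prove the reduced claim I would exploit the Belyi map attached to $A$, along the lines of the Chebyshev discussion above. Put $f=\tfrac12(A+1)$, a polynomial of degree $d$, hence a cover $\Bbb P^1\to\Bbb P^1$ totally ramified over $\infty$. Since $(A-1)(A+1)=(X-1)(X+1)B^2$ with $\gcd(A-1,A+1)=1$, every root of $A-1$ other than $\pm1$ has even multiplicity, and likewise for $A+1$; moreover $A(1),A(-1)\in\{1,-1\}$, and each of $X=1$ and $X=-1$ is a root of exactly one of $A\mp1$, necessarily with odd multiplicity. Comparing the degrees of the radicals of $A-1$ and $A+1$ then yields $a+b\leq d+1$, where $a$ and $b$ are the numbers of points of $f$ over $1$ and over $0$; on the other hand Riemann--Hurwitz, together with the total ramification over $\infty$, gives $a+b\geq d+1$, with equality exactly when $f$ is unramified outside $\{0,1,\infty\}$. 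Hence $a+b=d+1$, there is no further ramification, every ramification index over $0$ and over $1$ equals $2$ except at $X=1$ and $X=-1$, which are unramified, and a parity count on $\deg(A\mp1)=d$ distributes these two exceptional points over $\{0,1\}$ exactly as for $f_d=\tfrac12(T_d+1)$.

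By the uniqueness of the dessin d'enfant with this ramification data (established above), $f$ and $f_d$ differ by automorphisms of source and target: $f=f_d\circ\sigma$ or $f=1-f_d\circ\sigma$ with $\sigma\in\mathrm{PGL}_2(\CC)$. Both $f$ and $f_d$ are polynomials, so $\sigma$ fixes $\infty$ and is affine; and the unramified points of $f$ over $\{0,1\}$ are exactly the roots $\{1,-1\}$ of $X^2-1$, which $\sigma$ must send to the analogous set $\{1,-1\}$ for $f_d$, forcing $\sigma(X)=\pm X$. Since $T_d(-X)=(-1)^dT_d(X)$, we get $A=2f-1=\pm T_d$, and then $B^2=(A^2-1)/(X^2-1)=U_{d-1}^2$ by~\eqref{eq1}, so $B=\pm U_{d-1}$; undoing the normalisation finishes the proof. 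The crux, I expect, is the ramification count: one has to squeeze out of the single identity $(A-1)(A+1)=(X-1)(X+1)B^2$ both the parity of the multiplicities and --- via Riemann--Hurwitz --- enough rigidity to force exactly the Chebyshev ramification type. After that, the use of the uniqueness of the dessin and the determination of $\sigma$ are routine.
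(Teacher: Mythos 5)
Your proof is correct, and it implements the same key rigidity input as the paper --- uniqueness of the Belyi map with Chebyshev-type ramification data --- but through a noticeably different decomposition. The paper takes $f=P^2$, a degree-$2d$ map, and obtains separability of $P$, $Q$ (hence $D$) by a degree count on $f'$, so that $f$ has the ramification data of $\frac12\left(T_{2d}+1\right)$; it then needs the identity $T_{2d}=2T_d^2-1$ to extract $P=\pm T_d(\lambda X+\nu)$, and it leaves implicit both the reduction of the cover isomorphism to an affine substitution $\lambda X+\nu$ and the possible swap of the fibers over $0$ and $1$. You instead prove separability of $D$ directly (a double root $L^2$ forces $(P-LQ)(P+LQ)=1$), normalise $D$ to $X^2-1$, and work with the degree-$d$ map $f=\frac12(A+1)$, extracting the ramification data from the single identity $(A-1)(A+1)=(X^2-1)B^2$ by a parity argument on multiplicities sharpened by the Riemann--Hurwitz count $a+b\geq d+1$; you then pin down the residual automorphism $\sigma(X)=\pm X$ via the unramified points $\pm1$ and dispose of the $0\leftrightarrow1$ swap through the alternative $f=1-f_d\circ\sigma$. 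What your route buys is a smaller-degree Belyi map, no need for $T_{2d}=2T_d^2-1$, and an explicit justification of two steps the paper asserts without comment (that the identification is by an affine change of variable, and how the sign $\pm$ arises); what the paper's route buys is that the single derivative count delivers separability of $P$, $Q$ and $D$ at once, with no preliminary normalisation of $D$. Two small blemishes, neither fatal: your phrase that the parity count places the exceptional points ``exactly as for $f_d$'' is not quite right for even $d$ (both may lie over $0$, i.e.\ as for $1-f_d$), though your subsequent allowance of $f=1-f_d\circ\sigma$ covers it; and your dismissal of $d=0$ glosses over the fact that then $Q=0$ and condition~2 would force $D$ separable --- a degenerate case the paper's own proof also handles loosely, the statement really presupposing $d\geq1$.
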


\begin{proof} First of all, the relation~\ref{eq1} shows that the second condition implies the first. Suppose now that $P,Q,D\in\Bbb C[X]$ satisfy condition 1, with $\deg(D)=2$. Assuming $P\neq0$, it follows that $\deg(P)=d\geq1$ and thus $\deg(Q)=d-1$. Consider the polynomial $f=P^2$, so that $f'$ has degree $2d-1$. Setting
\[P=u\prod_{i=1}^r(X-x_i)^{e_i}\quad\mbox{and}\quad Q=v\prod_{i=1}^s(X-y_i)^{f_i},\]
with $u,v\in\Bbb C,r\leq d$ and $s\leq d-1$, we obtain the factorization
\[f'=\prod_{i=1}^r(X-x_i)^{2e_i-1}\prod_{i=1}^r(X-y_i)^{2f_i-1}R,\]
with $R\in\Bbb C[X]$. Since $d=\sum_{i=1}^re_i=1+\sum_{i=1}^sf_i$, we find the identity
\[2d-1=\sum_{i=1}^r(2e_i-1)+\sum_{i=1}^s(2f_i-1)+\deg(R)=4d-2-r-s+\deg(R),\]
which leads to
\[r+s=2d-1+\deg(R).\]
It then follows that $r=d,s=d-1$ and $\deg(R)=0$, i.e. $P$ and $Q$ are separable. Remark that the polynomial $D$ is then itself separable. In this case, the cover $\Bbb P^1\to\Bbb P^1$ induced by $f$ is only ramified above $\infty,0$ and $1$, i.e. it is a Belyi map. Moreover, it is totally ramified above $\infty$, all the points above $0$ have ramification index $2$ and the points above $1$ have ramification $2$ excepted two of them, which are unramified, corresponding to the roots of $D$. It then follows from the discussion in the first section that there exist constants $\lambda\in\Bbb C^\times$ and $\nu\in\Bbb C$, such that
\[f=\frac12\left(T_{2d}(\lambda X+\nu)+1\right).\]
Now, the general identity
\[T_{2d}=2T_d^2-1\]
leads to the relation
\[P^2=T_d^2(\lambda X+\nu),\]
from which we deduce the expression of $P$ in the theorem. Finally, the polynomials $Q$ and $D$ are obtained from relation~\ref{eq1} combined with the fact that the polynomial $U_{d-1}$ is separable.
\end{proof}

\begin{remark} If $d$ is odd, we can remove the $\pm$ sign in the expression of $P$ since in this case the Chebyshev polynomial $T_d$ defines an odd function, so that $T_d(-\lambda X-\nu)=-T_d(\lambda X+\nu)$.
\end{remark}

\section{Parametric solutions}

We are now ready to study Stefan Kohl's problem stated in the introduction, from which we keep the notation. We will refer to the polynomials $P,Q,R\in\Bbb C[X]$ as a  {\it parametric solution} associated to $(a,b)$, its {\it degree} being the integer $\deg(P)$. Remark that we can always reduce to the case $k=0$. From now on, we restrict to this situation. With this assumption, if $P,Q,D\in\Bbb Z[X]$ is a parametric solution, the same holds for the polynomials $P(mX),Q(mX)$ and $D(mX)$ for any integer $m\neq 0$. In the following paragraphs, we explicitly answer the first question by performing all the parametric solutions of degree $1$ or $2$ having inregral coefficients. We then bound the degree of general parametric solutions, showing, among the others, that if the integer $n$ is square-free then the degree is at most $6$.

\subsection{Degree 1}

We start by giving an explicit description of all parametric solutions with $P$ of degree $1$.

\begin{proposition}\label{prop1} Let $n$ be a positive integer which is not a perfect square and consider a solution $(a,b)$ of the Pell equation
\[x^2-ny^2=1.\]
Given three polynomials $P,Q,D\in\Bbb Z[X]$ with $\deg(P)=1$ and $\deg(D)=2$, the following conditions are equivalent:
\begin{enumerate}
\item We have the identity
\[P^2-DQ^2=1,\]
with $P(0)=a, Q(0)=b$ and $D(0)=n$.
\item Set $c=1$ if $b$ is odd and $c=2$ if $b$ is even. There exists an integer $m\neq0$ such that $P=P_0(c^{-1}mX), Q=b$ and $D=D_0(c^{-1}mX)$, with
\[\begin{cases}
P_0=b^2X+a,\\
D_0=b^2X^2+2aX+n.\\
\end{cases}\]
\end{enumerate}
\end{proposition}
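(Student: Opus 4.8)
The plan is to reduce condition (1) to the Chebyshev normal form of theorem~\ref{th1} and then extract the arithmetic; the implication (2)$\Rightarrow$(1) is the easy direction. For it, one checks directly that $P_0^2-D_0\cdot b^2=(b^2X+a)^2-b^2(b^2X^2+2aX+n)=a^2-nb^2=1$ and that $P_0(0)=a$, $D_0(0)=n$; substituting $c^{-1}mX$ for $X$ preserves both the identity and the values at $0$, and preserves integrality of the coefficients since $c^2\mid b^2$ (trivially if $c=1$, and because $b$ is even if $c=2$).

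For (1)$\Rightarrow$(2) I would start by invoking theorem~\ref{th1} with $d=\deg(P)=1$: using $T_1(Y)=Y$ and $U_0(Y)=1$, it provides $\lambda,\mu\in\Bbb C^\times$ and $\nu\in\Bbb C$ with $P=\pm(\lambda X+\nu)$, $Q=\mu$ and $D=\mu^{-2}\big((\lambda X+\nu)^2-1\big)$. Evaluating at $X=0$ forces $Q=b$ and $\mu=b$, and in particular $b\neq0$ (otherwise $\deg(P)=1$ would be incompatible with $P^2=1$). Since $P\in\Bbb Z[X]$ with $P(0)=a$, write $P=\ell X+a$ with $\ell\in\Bbb Z\setminus\{0\}$ its leading coefficient. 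Because $(\lambda X+\nu)^2=P^2$, the expression for $D$ becomes, using the relation $a^2-1=nb^2$,
\[D=b^{-2}(P^2-1)=\frac{\ell^2}{b^2}X^2+\frac{2a\ell}{b^2}X+n.\]

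The remaining step, and I think the only genuinely delicate one, is the divisibility analysis. Imposing $D\in\Bbb Z[X]$ gives $b^2\mid\ell^2$ and $b^2\mid 2a\ell$; since $(a,b)$ solves $x^2-ny^2=1$ we have $\gcd(a,b)=1$, hence $\gcd(a,b^2)=1$, and therefore $b^2\mid 2\ell$. If $b$ is odd, coprimality with $2$ upgrades this to $b^2\mid\ell$, so $m:=\ell/b^2\in\Bbb Z$ works with $c=1$; if $b$ is even, $m:=2\ell/b^2\in\Bbb Z$ works with $c=2$. In both cases $\ell=b^2m/c$ with $m\neq 0$, whence $P=b^2(m/c)X+a=P_0(c^{-1}mX)$, and substituting $\ell=b^2m/c$ in the displayed formula shows $D=D_0(c^{-1}mX)$ as well. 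The point that must be handled carefully is exactly this bookkeeping: verifying that the two a priori separate integrality constraints on the coefficients of $D$ collapse, via $\gcd(a,b)=1$, to the single normalization recorded by $c\in\{1,2\}$; everything else is formal.
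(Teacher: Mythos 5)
Your proof is correct and follows essentially the same route as the paper's: reduce to the normal form of Theorem~\ref{th1} with $d=1$, identify $\mu=b$, and run the identical divisibility argument ($\gcd(a,b)=1$ turns $b^2\mid 2a\ell$ into $b^2\mid 2\ell$, then split on the parity of $b$). The only cosmetic differences are that you absorb the $\pm$ sign by squaring instead of invoking the oddness of $T_1$, and that you work with the integer leading coefficient $\ell$ of $P$ rather than with $\lambda$ directly.
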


\begin{proof} First of all, if the polynomials $P,Q$ and $D$ are defined as in condition 2, it is easily checked that they satisfy condition 1. Suppose now that $P,Q,D\in\Bbb Z[X]$ satisfy condition 1. The polynomial $Q$ is constant, so that $Q=b$. Following the notation in theorem 1, we obtain the identity $\mu=b$ and, taking account of the remark after theorem~\ref{th1}, we have the expression
\[P=\lambda X+\nu,\]
which implies that $\lambda$ is an integer and that $\nu=a$. Since $b^{-2}(a^2-1)=n$, we have the identities
\[D=b^{-2}\left((\lambda X+\nu)^2-1\right)=\lambda^2b^{-2}X^2+2\lambda ab^{-2} X+n,\]
and thus $b$ divides $\lambda$ and $b^2$ divides $2\lambda a$. The integers $a$ and $b$ being coprime, we deduce that $b^2$ divides $2\lambda$. Setting $\lambda=bu$, it follows that $b$ divides $cu$, say $u=c^{-1}bm$, which leads to the desired expressions.
\end{proof}

\subsection{Degree 2}

We now give a complete description of parametric solutions when the polynomial $P$ has degree $2$.

\begin{proposition}\label{prop2} Let $n$ be a positive integer which is not a perfect square and consider a solution $(a,b)$ of the Pell equation
\[x^2-ny^2=1.\]
Given three polynomials $P,Q,D\in\Bbb Z[X]$ with $\deg(P)=\deg(D)=2$, the following conditions are equivalent:
\begin{enumerate}
\item We have the identity
\[P^2-DQ^2=1,\]
with $P(0)=a, Q(0)=b$ and $D(0)=n$.
\item There exist two integers $m\neq0$ and $\varepsilon\in\{\pm1\}$ such that, setting
\[c=\mbox{gcd}\left(b^3,(a+\varepsilon)b,2(a+\varepsilon)^2\right),\]
we have $P=P_0(c^{-1}mX), Q=Q_0(c^{-1}mX)$ and $D=D_0(c^{-1}mX)$, with
\[\begin{cases}
P_0=b^4(a+\varepsilon)X^2+2b^2(a+\varepsilon)X+a,\\
Q_0=b^3X+b,\\
D_0=(a+\varepsilon)^2b^2X^2+2(a+\varepsilon)^2X+n.
\end{cases}\]
\end{enumerate}

\end{proposition}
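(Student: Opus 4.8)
The plan is to mimic the proof of Proposition~\ref{prop1}, applying Theorem~\ref{th1} with $d=2$, where $T_2=2X^2-1$ and $U_1=2X$. I will assume $b\neq0$ throughout (otherwise $(a,b)=(\pm1,0)$, a degenerate case for which $c$ is not well defined); then $a^2=1+nb^2\geq3$, so $|a|\geq2$, whence $a+\varepsilon\neq0$ for both $\varepsilon\in\{\pm1\}$ and $c\geq1$.

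For $(2)\Rightarrow(1)$ I would first check that $P_0(c^{-1}mX)$, $Q_0(c^{-1}mX)$, $D_0(c^{-1}mX)$ lie in $\Bbb Z[X]$: by definition $c$ divides $b^3$, $(a+\varepsilon)b$ and $2(a+\varepsilon)^2$, hence also (multiplying) $c^2\mid b^4(a+\varepsilon)$, $c\mid 2(a+\varepsilon)b^2$ and $c^2\mid(a+\varepsilon)^2b^2$, and these divisibilities are exactly what clears the denominators of the five non-constant coefficients after the substitution $X\mapsto c^{-1}mX$. The identity $P_0^2-D_0Q_0^2=1$ can be obtained by setting $w=(b^2X+1)^2$ and observing that $Q_0^2=b^2w$, $P_0=(a+\varepsilon)w-\varepsilon$, and $D_0=(a+\varepsilon)^2b^{-2}w-2\varepsilon(a+\varepsilon)b^{-2}$ (the last using $nb^2=a^2-1=(a+\varepsilon)(a-\varepsilon)$), so that $D_0Q_0^2=(a+\varepsilon)^2w^2-2\varepsilon(a+\varepsilon)w$ and $P_0^2-D_0Q_0^2=\varepsilon^2=1$; alternatively, $P_0,Q_0,D_0$ are the polynomials produced by condition~2 of Theorem~\ref{th1} for $\nu$ a square root of $\varepsilon(a+\varepsilon)/2$, $\mu=b/(2\nu)$, $\lambda=b^2\nu$. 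Since $X\mapsto c^{-1}mX$ preserves any Pell identity and leaves constant terms unchanged, the resulting triple $(P,Q,D)$ satisfies $P^2-DQ^2=1$ with $P(0)=a$, $Q(0)=b$, $D(0)=n$.

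For $(1)\Rightarrow(2)$ I would invoke Theorem~\ref{th1}: there are $\lambda,\mu\in\Bbb C^\times$, $\nu\in\Bbb C$ and a sign $\varepsilon\in\{\pm1\}$ with $P=\varepsilon\big(2(\lambda X+\nu)^2-1\big)$, $Q=2\mu(\lambda X+\nu)$, $D=\mu^{-2}\big((\lambda X+\nu)^2-1\big)$. Evaluating at $0$: $P(0)=a$ forces $2\nu^2-1=\varepsilon a$, i.e.\ $\nu^2=\varepsilon(a+\varepsilon)/2$ (so $\nu\neq0$), and $Q(0)=b$ forces $\mu=b/(2\nu)$. Setting $\rho=\lambda/\nu\in\Bbb C^\times$ and using $\nu^2=\varepsilon(a+\varepsilon)/2$, $\nu^4=(a+\varepsilon)^2/4$, $\mu^{-2}=4\nu^2/b^2$ gives
\[\begin{cases}
P=\rho^2(a+\varepsilon)X^2+2\rho(a+\varepsilon)X+a,\\
Q=b\rho X+b,\\
D=b^{-2}\rho^2(a+\varepsilon)^2X^2+2b^{-2}\rho(a+\varepsilon)^2X+n.
\end{cases}\]
From $Q\in\Bbb Z[X]$ we get $b\rho\in\Bbb Z$, so $\rho\in\Bbb Q$. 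For each prime $\ell$, writing $x=v_\ell(\rho)$, $\alpha=v_\ell(a+\varepsilon)$, $\beta=v_\ell(b)$ ($v_\ell$ the $\ell$-adic valuation), the integrality of the leading coefficient of $Q$ and of the two non-constant coefficients of $D$ yields $x\geq-\beta$, $x\geq\beta-\alpha$ and $x\geq2\beta-2\alpha-v_\ell(2)$; since $v_\ell(c)=\min(3\beta,\alpha+\beta,2\alpha+v_\ell(2))$, the largest of these bounds is $2\beta-v_\ell(c)=v_\ell(b^2/c)$. Hence $m:=\rho c/b^2$ has non-negative valuation at every prime, so $m\in\Bbb Z$, and $m\neq0$ because $\rho\neq0$. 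Substituting $\rho=b^2m/c$ back into the displayed formulas shows $P=P_0(c^{-1}mX)$, $Q=Q_0(c^{-1}mX)$, $D=D_0(c^{-1}mX)$.

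I expect the delicate part to be the $\ell$-adic step: one has to recognise that, of all the coefficientwise integrality conditions, only three are binding, one per argument of the gcd defining $c$ (the leading coefficient of $Q$ against $b^3$, the leading and middle coefficients of $D$ against $(a+\varepsilon)b$ and $2(a+\varepsilon)^2$), the conditions coming from $P$ being consequences (using $\alpha,\beta\geq0$); so the admissible values of $\rho$ are exactly the non-zero multiples of $b^2/c$. The rest is bookkeeping, chiefly keeping track of the sign $\varepsilon$—determined by $P$ in one direction, freely chosen in the other—and disposing of the trivial solution $b=0$.
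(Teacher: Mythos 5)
Your proof is correct and follows essentially the same route as the paper: apply Theorem~\ref{th1} with $d=2$, evaluate at $0$ to get $\nu^2=\varepsilon(a+\varepsilon)/2$ and $\mu=b/(2\nu)$, and then use integrality of the coefficients of $Q$ and $D$ to show the remaining parameter is an integer multiple of $b^2/c$. Your prime-by-prime valuation computation simply makes explicit the divisibility equivalence that the paper states tersely via the gcd defining $c$, and your explicit verification of $(2)\Rightarrow(1)$ matches the paper's ``direct computation.''
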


\begin{proof} First of all, a direct computation shows that if $P,Q$ and $D$ are defined as in condition 2, they satisfy condition 1. Suppose now that $P,Q$ and $D$ satisfy the first condition. Following theorem~\ref{th1}, we have the expressions
\[\begin{cases}
\varepsilon P=2\lambda^2X^2+4\lambda\nu X+2\nu^2-1,\\
Q=2\lambda\mu X+2\nu\mu,\\
D=\lambda^2\mu^{-2}X^2+2\lambda\nu\mu^{-2}X+(\nu^2-1)\mu^{-2}.
\end{cases}\]
with $\varepsilon=\pm1$. The relations $P(0)=a$ and $Q(0)=b$ lead to the identities
\[\nu^2=\frac{\varepsilon a+1}2\qquad\mbox{and}\qquad\mu=\frac b{2\nu}.\]
In particular, we obtain the relation
\[(\nu^2-1)\mu^{-2}=n.\]
The leading coefficient of $Q$ being an integer, there exists $u\in\Bbb Z$ such that
\[\lambda=\frac{\nu u}b.\]
We then deduce that $D$ belongs to $\Bbb Z[X]$ if and only if $b^2$ divides $m(\varepsilon a+1)$ and $b^3$ divides $2u(\varepsilon a+1)^2$. These last two conditions can be summarized as the divisibility of\linebreak $u(\varepsilon a+1)\gcd(b,2(\varepsilon a+1))$ by $b^3$, which is itself equivalent to the divisibility of $u$ by $c^{-1}b^3$. Setting $u=c^{-1}b^3m$, we obtain the expressions in condition $2$.
\end{proof}

\begin{example} Setting $m=\pm1$ in the above proposition leads to parametric solutions whose coefficients have the smallest absolute value. For example, for $n=31,a=1520$ and $b=273$, setting $m=\varepsilon=1$, we find $c=59319$ and obtain the polynomials
\[\begin{cases}
P=2401X^2+3822X+1520,\\
Q=343X+273,\\
D=49X^2+78X+31,
\end{cases}\]
whose coefficients are much smaller than those of $P_0,Q_0$ and $D_0$, since we have the expressions
\[\begin{cases}
P_0=8448503770161X^2+226717218X+1520,\\
Q_0=20346417X+273,\\
D_0=172418444289X^2+4626882X+31.
\end{cases}\]
\end{example}

\subsection{Bounding the degree} We finally study the possible degrees of a parametric solution. We start with an easy but usefull lemma.

\begin{lemma}\label{lemm1} Let $P,Q,D\in\Bbb Q[X]$ be a paramteric solution of degree $d$ associated to $(a,b)$. Then there exist $P_1,Q_1,D_1\in\Bbb Z[X]$ defining a parametric solution of degree $d$ associated to $(a,b)$.
\end{lemma}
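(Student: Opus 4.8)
The plan is to clear denominators, but not too naively: the subtlety is that a parametric solution involves the extra freedom of rescaling $X$, and this freedom must be used to absorb the denominators while keeping $\deg(P)=d$ and the specialization values $P(0)=a$, $Q(0)=b$, $D(0)=n$ intact.

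First I would recall that since $P,Q,D\in\Bbb Q[X]$, there is a common denominator $N\in\Bbb Z_{>0}$ such that $NP,NQ,ND\in\Bbb Z[X]$; more precisely, I will choose $N$ so that all coefficients of $P$, $Q$ and $D$ lie in $\frac1N\Bbb Z$. The idea is then to substitute $X\mapsto NX$ (or $X\mapsto MX$ for a suitable nonzero integer $M$) and see which coefficients become integral. If $P=\sum_{i=0}^d p_iX^i$ with $p_i\in\frac1N\Bbb Z$, then $P(MX)=\sum p_i M^iX^i$, and the coefficient of $X^i$ is $p_iM^i$; writing $p_i=\alpha_i/N$ with $\alpha_i\in\Bbb Z$, this is $\alpha_iM^i/N$, which is integral as soon as $N\mid M^i$ for every $i\geq 1$ that actually occurs. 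Taking $M=N$ already does the job, since $N\mid N^i$ for all $i\geq1$; the constant term $p_0=a$ is untouched and already an integer. The same substitution works simultaneously for $Q$ and $D$ because a single common $N$ controls all three. So I set $P_1=P(NX)$, $Q_1=Q(NX)$, $D_1=D(NX)$.

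Then I would check the three required properties. Integrality of coefficients follows from the computation just described. The degree is preserved: the leading coefficient of $P_1$ is $p_d N^d\neq 0$ since $p_d\neq0$ and $N\neq0$, so $\deg(P_1)=d$; likewise $\deg(D_1)=2$. The Pell identity is preserved because substitution is a ring homomorphism $\Bbb Q[X]\to\Bbb Q[X]$, $f(X)\mapsto f(NX)$, so $P_1^2-D_1Q_1^2 = (P^2-DQ^2)(NX)=1$. Finally the specialization is preserved: $P_1(0)=P(0)=a$, $Q_1(0)=Q(0)=b$, $D_1(0)=D(0)=n$, since evaluating at $X=0$ commutes with the substitution $X\mapsto NX$. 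Hence $P_1,Q_1,D_1$ form a parametric solution of degree $d$ associated to $(a,b)$ with integral coefficients.

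There is essentially no hard step here — this is the "easy but useful" lemma the text advertises — but the one point that deserves a word of care is ensuring that the substitution integer can be chosen uniformly for all three polynomials; this is immediate once one takes $N$ to be a common denominator of the union of all coefficients of $P$, $Q$ and $D$. I would also remark in passing that the construction is compatible with the normalization $k=0$ adopted in this section (the reduction to $k=0$ was already made), so no shift is needed and only the dilation $X\mapsto NX$ is required.
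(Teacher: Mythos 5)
Your argument is correct and is essentially identical to the paper's own proof: both take a common denominator $N$ of all coefficients of $P$, $Q$, $D$ and substitute $X\mapsto NX$, noting that the constant terms are already integers so the specialization at $0$ is unaffected. The extra verifications you supply (degree, Pell identity, evaluation at $0$) are exactly the routine checks the paper leaves implicit.
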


\begin{proof} By assumption, the constant terms of $P,Q$ and $D$ are integers. If $m$ denotes the lowest common multiple of the denominators of the coefficients of $P,Q$ and $D$ then the polynomials $P_1=P(mX),Q_1=Q(mX)$ and $D_1=D(mX)$ belong to $\Bbb Z[X]$ and define a parametric solution of degree $d$ associated to $(a,b)$.
\end{proof}

Let now $n$ be a positive integer which is not a square and consider the real quadratic field $K=\Bbb Q(\sqrt{n})$. Denote by $\mathcal O_K$ the ring of integers of $K$. The subgroup $U$ of $\mathcal O_K^\times$ consisting of elements having norm $1$ is isomorphic to $\Bbb Z\times\Bbb Z/2\Bbb Z$. The elements of the group $V=U\cap\Bbb Z[\sqrt{n}]$ bijectively correspond to the solutions of the Pell equation
\[x^2-ny^2=1.\]
Moreover, $U$ and $V$ are isomorphic to $\Bbb Z\times\Bbb Z/2\Bbb Z$, the quotient $U/V$ is cyclic and $V$ is generated by $-1$ and $a+b\sqrt{n}$, where $(a,b)$ is a fundamental solution. More generally, if $(a,b)$ is any solution of the above Pell equation, denote by $V(a,b)$ the subgroup of $V$ generated by $a+b\sqrt{n}$ and $-1$.

\begin{theorem}\label{th2}Let $n$ be a positive integer which is not a square and consider a solution $(a,b)$ of the Pell equation
\[x^2-ny^2=1.\]
The following conditions are equivalent:
\begin{enumerate}
\item There exists a parametric solution $P,Q,D\in\Bbb Z[X]$ of degree $d$ associated to $(a,b)$.
\item The integer $d$ divides $2(U:V(a,b))$.
\end{enumerate}
\end{theorem}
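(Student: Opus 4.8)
The plan is to pass, in both directions, through Theorem~\ref{th1} and then through the arithmetic of norm-one units; by Lemma~\ref{lemm1} it suffices to treat parametric solutions over $\Bbb Q$. Write $K=\Bbb Q(\sqrt n)$ and let $\epsilon>1$ be the fundamental unit of norm $1$ of $\mathcal O_K$, so $U=\{\pm\epsilon^{\,j}:j\in\Bbb Z\}$; since $(a,b)$ is a nontrivial solution, $M:=(U:V(a,b))$ is finite, $V(a,b)=\{\pm\epsilon^{\,Mj}:j\in\Bbb Z\}$, and $a+b\sqrt n=\pm\epsilon^{\pm M}$. The bridge between the two points of view is extracted from Theorem~\ref{th1}: writing a parametric solution of degree $d$ as $P=\pm T_d(\lambda X+\nu)$, $Q=\mu U_{d-1}(\lambda X+\nu)$, $D=\mu^{-2}\big((\lambda X+\nu)^2-1\big)$ with $\lambda,\mu\in\Bbb C^\times$, $\nu\in\Bbb C$, the identity $D(0)=n$ reads $\nu^2-1=n\mu^2$, so $\mu\sqrt n$ is a square root of $\nu^2-1$; plugging $Y=\nu$ into $(Y+\sqrt{Y^2-1})^d=T_d(Y)+U_{d-1}(Y)\sqrt{Y^2-1}$ and using $P(0)=a$, $Q(0)=b$ gives
\[w^d=\pm a+b\sqrt n\in V(a,b),\qquad w:=\nu+\mu\sqrt n .\]

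For $(1)\Rightarrow(2)$ I would first extract rationality from $P,Q,D\in\Bbb Q[X]$. The three coefficients of $D$ give $\mu^{-2}\lambda^2,\mu^{-2}\lambda\nu,\mu^{-2}(\nu^2-1)\in\Bbb Q$, hence $\mu^{-2}\nu^2\in\Bbb Q$ and then $\mu^2,\lambda^2,\nu^2,\lambda\nu\in\Bbb Q$; the coefficient of $X^{d-1}$ in $P$ and the leading coefficient of $Q$ give $\lambda^{d-1}\nu\in\Bbb Q$ and $\mu\lambda^{d-1}\in\Bbb Q$, so $\mu\nu\in\Bbb Q$ (multiply, and use $\lambda^{2(d-1)}\in\Bbb Q^\times$). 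Thus $w^2=(2\nu^2-1)+2\mu\nu\sqrt n\in K$, it has norm $1$ (because $w\bar w=\nu^2-n\mu^2=1$), and as $(w^2)^d=(w^d)^2\in\Bbb Z[\sqrt n]$ it is integral over $\mathcal O_K$, so $w^2\in U$. Writing $w^2=\pm\epsilon^{\,t}$ ($t\in\Bbb Z$) and recalling $w^d=\pm\epsilon^{\pm M}$: if $d$ is even, $w^d=(w^2)^{d/2}$ forces $\epsilon^{\,td/2}=\epsilon^{\pm M}$, so $\tfrac d2\mid M$; if $d$ is odd, then $w=w^d\cdot(w^2)^{-(d-1)/2}\in U$, say $w=\pm\epsilon^{\,j}$, and $w^d=\pm\epsilon^{\pm M}$ forces $jd=\pm M$, so $d\mid M$. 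Either way $d\mid 2M$.

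For $(2)\Rightarrow(1)$, after replacing $(a,b)$ by $(|a|,|b|)$ (which leaves $V(a,b)$, hence $M$, unchanged) and flipping the signs of $P,Q$ at the end, I may assume $a+b\sqrt n=\epsilon^M$. If $d\mid M$: take $w=\epsilon^{M/d}\in U$, write $w=\nu+\mu\sqrt n$ with $\nu,\mu\in\Bbb Q$, $\mu\neq0$, and set $P=T_d(X+\nu)$, $Q=\mu U_{d-1}(X+\nu)$, $D=\mu^{-2}\big((X+\nu)^2-1\big)$; these lie in $\Bbb Q[X]$, satisfy $P^2-DQ^2=1$ by Theorem~\ref{th1} (equivalently by relation~\ref{eq1} after $X\mapsto X+\nu$), have degree $d$, and take the right values at $0$ since $(\nu+\mu\sqrt n)^d=w^d=\epsilon^M=a+b\sqrt n$. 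If $d\mid 2M$ but $d\nmid M$, then $d$ is even and $k:=2M/d$ is a positive integer, odd because $d\nmid M$; the explicit Hilbert~$90$ witness $\beta:=\epsilon^{\,k}+1$ satisfies $\beta/\bar\beta=\epsilon^{\,k}$ (as $\bar\beta=\epsilon^{-k}+1=\epsilon^{-k}\beta$), and writing $\beta=q+r\sqrt n$ (so $r\neq0$, $N(\beta)\neq0$) I set $\lambda^2=N(\beta)^{-1}$, $\nu=q\lambda$, $\mu=r\lambda$ and define $P,Q,D$ by the formulas of Theorem~\ref{th1} with the $+$ sign. Because $d$ is even, $T_d$ is a polynomial in even powers of its argument and $U_{d-1}$ in odd powers, so $P$ involves only even powers of $\lambda$ and the coefficients of $Q$ are $r$ times even powers of $\lambda$; together with $\mu^{-2}\lambda^2=r^{-2}$ this forces $P,Q,D\in\Bbb Q[X]$ even when $\lambda\notin\Bbb Q$. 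One then checks $D(0)=n$ directly and, with $w:=\lambda\beta=\nu+\mu\sqrt n$, that $w^d=\lambda^d\beta^d=(\beta/\bar\beta)^{d/2}=\epsilon^M=a+b\sqrt n$, which pins down $P(0)=a$, $Q(0)=b$ exactly as above. Lemma~\ref{lemm1} then yields solutions in $\Bbb Z[X]$.

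The crux --- and the reason the exponent is $2(U:V(a,b))$ and not $(U:V(a,b))$ --- is that a $\Bbb Q$-parametric solution need not come from a genuine unit $w\in U$: over $\Bbb Q$ the parameters $\lambda,\mu,\nu$ of Theorem~\ref{th1} are determined only up to a common quadratic irrationality, so the invariant truly attached to the solution is $w^2\in U$ rather than $w$. This is exactly what the coefficient bookkeeping in $(1)\Rightarrow(2)$ must isolate (one obtains $\nu^2,\mu\nu\in\Bbb Q$ but possibly $\nu,\mu\notin\Bbb Q$), and dually what the even-degree Hilbert~$90$ construction in $(2)\Rightarrow(1)$ exploits; I expect this to be the main difficulty. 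Tracking the various signs ($\pm T_d$, Galois conjugation, the sign of $b$) is then routine.
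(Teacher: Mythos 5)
Your proof is correct, and while the overall architecture coincides with the paper's (Theorem~\ref{th1} to extract $\lambda,\mu,\nu$, rationality bookkeeping on coefficients, evaluation at $0$ to land in the unit group $U$, and Lemma~\ref{lemm1} to pass between $\Bbb Q[X]$ and $\Bbb Z[X]$), you diverge from it in two places worth noting. In $(1)\Rightarrow(2)$ the paper splits by parity of $d$: for $d$ odd it shows $\lambda^d,\lambda^2\in\Bbb Q$, hence $\lambda,\mu,\nu\in\Bbb Q$ and $\nu+\mu\sqrt n\in U$; for $d$ even it passes to the auxiliary degree-$2$ polynomials $P_1=T_2(\lambda X+\nu)$, $Q_1=\mu U_1(\lambda X+\nu)$ and evaluates at $0$. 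Your version isolates $w^2=(2\nu^2-1)+2\mu\nu\sqrt n\in U$ in all cases and, for odd $d$, recovers $w=w^d(w^2)^{-(d-1)/2}\in U$ — the same idea in a slicker packaging, since $w^2$ is exactly the paper's $P_1(0)+Q_1(0)\sqrt n$. The genuinely different step is the existence direction when $d$ is even and $d\nmid M$: the paper takes an $m$-th root $u+v\sqrt n\in U$ of $a+b\sqrt n$ (with $m=d/2$), feeds the rational pair $(u,v)$ into the explicit degree-$2$ family of Proposition~\ref{prop2}, and raises $P_1+Q_1\sqrt D$ to the $m$-th power; you instead build the degree-$d$ solution directly from Theorem~\ref{th1}'s parametrization with the Hilbert~90 element $\beta=\epsilon^{k}+1$, $\lambda^2=N(\beta)^{-1}$, using the parity of $T_d$ and $U_{d-1}$ to keep the coefficients rational even though $\lambda$ itself may be irrational. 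Both constructions are valid (your explicit choice checks out: e.g.\ $n=3$, $(a,b)=(2,1)$, $d=2$ gives $P=\tfrac13X^2+2X+2$, $Q=\tfrac13X+1$, $D=X^2+6X+3$); the paper's reuses Proposition~\ref{prop2} and stays inside $\Bbb Q$, yours is self-contained and makes transparent why the bound is $2(U:V(a,b))$ rather than $(U:V(a,b))$ — namely the quadratic ambiguity in $(\lambda,\mu,\nu)$, which the paper never states explicitly. The only points you leave tacit (as does the paper) are the exclusion of the trivial solution $(\pm1,0)$ and the identity $V(-a,b)=V(a,b)$ used in the sign reductions; both are immediate.
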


\begin{proof} We start by supposing that the first condition is fulfilled. Let $P,Q,D\in\Bbb Z[X]$ be a parametric solution of degree $d$. We know from theorem~\ref{th1} that there exist constants $\lambda,\mu\in\Bbb C^\times$ and $\nu\in\Bbb C$ such that
\[\begin{cases}
P=\pm T_d(\lambda X+\nu),\\
Q=\mu U_{d-1}(\lambda X+\nu),\\
D=\mu^{-2}\left((\lambda X+\nu)^2-1\right).
\end{cases}\]
We can restrict to the case where $P=T_d(\lambda X+\nu)$. Indeed, for $P=-T_d(\lambda X+\nu)$, if we replace $P$ with $-P$, we obtain $P(0)=-a$ and $Q(0)=b$, which still defines a solution of the same Pell equation and the groups $V(a,b)$ and $V(-a,b)$, which are isomorphic, have the same index in $U$. Considering the quotient of the coefficient of $X$ in $D$ by its leading coefficient, it follows that $\lambda\nu^{-1}$ is rational. Similarly, comparing the leading coefficients of $P$ and $Q$, we deduce that $\lambda\mu^{-1}$ is also rational. From the expression of the coefficient of $X$ in $D$, the same holds for $\nu\mu^{-1}$. Finally, taking the quotient of the constant term of $D$ by its leading coefficient implies that $\lambda^2$ is rational. If $d$ is odd then the expression of the leading coefficient of $P$ leads to the relation $\lambda^d\in\Bbb Q$ and thus $\lambda,\nu$ and $\mu$ are rational. We then have the identity
\[P+Q\sqrt{D}=\left(\lambda X+\nu+\mu\sqrt{D}\right)^d.\]
Evaluating at $0$, we obtain the relation
\[a+b\sqrt{n}=\left(\nu+\mu\sqrt{n}\right)^d,\]
In particular, the element $\nu+\mu\sqrt{n}\in K$ is an algebraic integer; it therefore belongs to $U$. The group $U/V(a,b)$ being cyclic, we deduce that $d$ divides $(U:V(a,b))$. Suppose now that $d=2m$ is even. Setting
\[\begin{cases}
P_1=4(\lambda X+\nu)^2-1,\\
Q_1=2\lambda\mu X+2\nu\mu,
\end{cases}\]
we have the identity
\[P+Q\sqrt{D}=\left(P_1+Q_1\sqrt{D}\right)^m.\]
Remark that the above discussion implies that $P_1$ and $Q_1$ belong to $\Bbb Q[X]$. Proceeding as before, we deduce that $m$ divides $(U:V(a,b))$.

Let now $d$ be an integer dividing $2(U:V(a,b))$. As before, we start with the case where $d$ is odd, so that it divides $(U:V(a,b))$. From the definition of $V(a,b)$, there exists an element $u+v\sqrt{n}$ of $U$ such that
\[a+b\sqrt{n}=\left(u+v\sqrt{n}\right)^d.\]
Remark that $2u$ and $2nv$ are integers (actually, setting $n=r^2s$, with $s$ square-free, it follows that $2rv$ is an integer). In particular, the elements $u$ and $v$ are rational. In this case, the polynomials
\[\begin{cases}
P=T_d(X+u),\\
Q=vU_{d-1}(X+u),\\
D=v^{-2}X^2+2v^{-2}uX+n,
\end{cases}\]
belong to $\Bbb Q[X]$ and define a parametric solution of degree $d$ associated to $(a,b)$. We then apply lemma~\ref{lemm1}. Suppose now that $d=2m$ is even, so that $m$ divides $(U:V(a,b))$. As before, there exists $u+v\sqrt{n}\in U$ such that
\[a+b\sqrt{n}=\left(u+v\sqrt{n}\right)^m.\]
Remark that the polynomials $P_0,Q_0$ and $D_0$ defined in roposition~\ref{prop2} also work for rational solutions of the Pell equation
\[x^2-ny^2=1.\]
In the present case, the polynomials
\[\begin{cases}
P_1=v^4(u+1)X^2+2v^2(u+1)X+u,\\
Q_1=v^3X+v,\\
D=(u+1)^2v^2X^2+2(u+1)^2X+n,
\end{cases}\]
belong to $\Bbb Q[X]$ and define a parametric solution of degree $2$ associated to $(u,v)$. Setting
\[P+Q\sqrt{D}=\left(P_1+Q_1\sqrt{D}\right)^m,\]
the polynomials $P,Q,D\in\Bbb Q[X]$ are a parametric solution associated to $(a,b)$ and we apply once again lemma~\ref{lemm1}.
\end{proof}

This last result implies that the degree of a parametric solution associated to a fixed solution $(a,b)$ is bounded. It is then natural investigate the existence of an uniform bound. From the above proof, it easily follows that the answer is negative if we don't restrict to fundamental solutions. The following result shows that without any assumtion on the integer $n$, such an uniform bound does not exist even when restricting to fundamental solutions.

\begin{corollary}\label{cor1} For any positive integer $d$, there exist an integer $n$ which is not a square, a fundamental solution of the Pell equation
\[x^2-ny^2=1\]
and a parametric solution of degree $d$ associated to it.
\end{corollary}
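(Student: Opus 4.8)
The plan is to reduce to Theorem~\ref{th2} and then write down an explicit family. For a \emph{fundamental} solution $(a,b)$ one has $V(a,b)=V$, so by Theorem~\ref{th2} a parametric solution of degree $d$ associated to $(a,b)$ exists as soon as $d$ divides $2(U:V)$. Hence it is enough to produce, for each $d\geq1$, a non-square $n$ whose fundamental solution satisfies $(U:V)=d$. The idea is to exploit the non-maximality of $\Bbb Z[\sqrt n]$ inside $\mathcal O_{\Bbb Q(\sqrt n)}$ when $n$ carries a square factor: this forces the fundamental solution of the Pell equation to be a high power of the fundamental unit of the maximal order.

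Concretely, I would work in $K=\Bbb Q(\sqrt3)$. Here $3\not\equiv1\pmod4$, so $\mathcal O_K=\Bbb Z[\sqrt3]$, and the group $U$ of norm-$1$ units of $\mathcal O_K$ is $\langle-1,\varepsilon\rangle$ with $\varepsilon=2+\sqrt3$ (indeed $N(\varepsilon)=1$). Write $\varepsilon^k=c_k+d_k\sqrt3$ with $c_k,d_k\in\Bbb Z$; then $d_0=0$, $d_1=1$ and $d_{k+1}=4d_k-d_{k-1}$, so $(d_k)_{k\geq1}$ is positive and strictly increasing. Given $d\geq1$, set
\[ n=3\,d_d^{\,2}. \]
This $n$ is not a square, $\Bbb Q(\sqrt n)=K$, and $\Bbb Z[\sqrt n]=\Bbb Z+d_d\Bbb Z\sqrt3$; moreover $\varepsilon^d=c_d+d_d\sqrt3=c_d+\sqrt n$ with $c_d^2-3d_d^2=N(\varepsilon)^d=1$, so $(c_d,1)$ is a solution of $x^2-ny^2=1$.

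The step that really needs an argument is the identity $(U:V)=d$ together with the fact that $(c_d,1)$ is the fundamental solution. For this, note that $\pm\varepsilon^k=\pm(c_k+d_k\sqrt3)$ belongs to $\Bbb Z[\sqrt n]$ if and only if $d_d\mid d_k$; the set of such $k$ is a subgroup $k_0\Bbb Z$ of $\Bbb Z$, since $\Bbb Z[\sqrt n]$ is a ring stable under conjugation and $\varepsilon^{-1}=\bar\varepsilon$. This subgroup contains $d$, hence $k_0\mid d$; and for $1\leq k<d$ we have $0<d_k<d_d$, so $d_d\nmid d_k$ and $k\notin k_0\Bbb Z$. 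Therefore $k_0=d$, i.e. $V=\langle-1,\varepsilon^d\rangle$ and $(U:V)=d$. Since $\varepsilon^d=c_d+\sqrt n$ generates $V$ modulo torsion, the solution $(c_d,1)$ is fundamental, $V(c_d,1)=V$, and $(U:V(c_d,1))=d$; as $d$ divides $2d=2(U:V(c_d,1))$, Theorem~\ref{th2} yields a parametric solution of degree $d$ associated to $(c_d,1)$, which proves the corollary.

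I do not anticipate a serious obstacle: the construction is completely explicit, and the only inputs are that $(d_k)$ is a positive strictly increasing sequence and that $\Bbb Z[\sqrt3]$ is maximal. The same argument runs over any real quadratic field with a norm-$1$ fundamental unit (for instance $\Bbb Q(\sqrt6)$), with $d_d$ replaced by the corresponding term of the associated Lucas-type sequence, so there is considerable freedom in the choice of the family.
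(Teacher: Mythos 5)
Your construction is exactly the paper's: the paper sets $(2+\sqrt3)^d=a+b\sqrt3$, takes $n=3b^2$, observes that $(a,1)$ is a fundamental solution with $(U:V(a,1))$ a multiple of $d$, and invokes Theorem~\ref{th2}; your $c_d,d_d$ are this $a,b$. Your write-up is correct and simply supplies the details (monotonicity of $(d_k)$ and the subgroup argument identifying $V$) that the paper leaves implicit.
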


\begin{proof} Setting
\[\left(2+\sqrt{3}\right)^d=a+b\sqrt{3}\]
and $n=3b^2$, the couple $(a,1)$ is a fundamental solution of the Pell equation
\[x^2-ny^2=1.\]
By construction, the integer $(U:V(a,b))$ is a multiple of $d$ and we can apply theorem~\ref{th2}.
\end{proof}

We close the paper by showing that when restricting to square-free integers $n$, the degree of a parametric solution is bounded by $6$. This will follow from the elementary result below.

\begin{lemma}\label{lemm2} If the integer $n$ is square-free then the group $U/V$ is cyclic of order dividing $3$.
\end{lemma}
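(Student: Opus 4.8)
The plan is to split according to the residue of $n$ modulo $4$. If $n\equiv 2$ or $3\pmod 4$ then $\mathcal O_K=\Bbb Z[\sqrt n]$, hence $U=V$ and $U/V$ is trivial; there is nothing to prove. So assume from now on that $n\equiv 1\pmod 4$, in which case $\mathcal O_K=\Bbb Z[\omega]$ with $\omega=(1+\sqrt n)/2$ and $\Bbb Z[\sqrt n]$ has index $2$ in $\mathcal O_K$.

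Since $U\cong\Bbb Z\times\Bbb Z/2\Bbb Z$ with $-1\in V$, the subgroup $V/\{\pm1\}$ of $U/\{\pm1\}\cong\Bbb Z$ has the form $e\Bbb Z$ with $e=(U:V)\geq1$, and $U/V$ is cyclic of order $e$ (as already noted in the text). It therefore suffices to show that the cube of a generator of $U$ modulo $\{\pm1\}$ lies in $V$, for this forces $e\mid 3$.

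Let $\varepsilon$ be such a generator. Being an element of $\mathcal O_K$, we may write $\varepsilon=(p+q\sqrt n)/2$ with $p,q\in\Bbb Z$ satisfying $p\equiv q\pmod 2$, and the condition $\varepsilon\in U$ reads $p^2-nq^2=4$. Expanding $\varepsilon^3$ and using $nq^2=p^2-4$ to eliminate $n$ yields
\[\varepsilon^3=\frac{(p^3-3p)+q(p^2-1)\sqrt n}{2}.\]
Now $p^3-3p\equiv p^3-p=p(p-1)(p+1)\equiv0\pmod2$, and $q(p^2-1)$ is even as well: if $q$ is even this is clear, while if $q$ is odd then $p$ is odd, so $p^2-1$ is even. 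Hence $\varepsilon^3\in\Bbb Z[\sqrt n]$, and since it has norm $1$ it belongs to $V$; this concludes the argument.

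The only point requiring care is the parity bookkeeping in the displayed formula, which genuinely uses the congruence $p\equiv q\pmod 2$ characterising $\mathcal O_K$ when $n\equiv1\pmod4$; everything else is formal. The underlying reason is that $\Bbb Z[\sqrt n]=\Bbb Z+2\mathcal O_K$ is the preimage of the prime subfield under the reduction $\mathcal O_K\to\mathcal O_K/2\mathcal O_K$, and the unit group of $\mathcal O_K/2\mathcal O_K$ is trivial or cyclic of order $3$ according as $2$ splits or is inert in $K$.
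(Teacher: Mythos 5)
Your proof is correct and follows essentially the same route as the paper: after reducing to $n\equiv1\pmod4$, you compute the cube of a unit $\frac12(p+q\sqrt n)$ with $p^2-nq^2=4$, eliminate $n$, and check parity to conclude the cube lies in $\Bbb Z[\sqrt n]$, exactly as in the paper's argument (your restriction to a generator of $U$ modulo $\pm1$ is an inessential variation, and your closing remark about $\mathcal O_K/2\mathcal O_K$ is a nice conceptual gloss).
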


\begin{proof} We have to prove that for any $x\in U$, we have $x^3\in\Bbb Z[\sqrt{n}]$. Since for $n$ congruent to $2$ or $3$ modulo $4$ we have the identity $\mathcal O_K=\Bbb Z[\sqrt{n}]$, we can assume $n\equiv1\pmod4$, so that an element $x\in\mathcal O_K$ can be uniquely written as $x=\frac12(a+b\sqrt{n})$, where $a,b\in\Bbb Z$ have the same parity. Such an element belongs to $U$ if and only if $a^2-nb^2=4$, in which case we find the identities
\[8x^3=a(a^2+3nb^2)+b(3a^2+nb^2)\sqrt{n}=4a(a^2-3)+4b(a^2-1)\sqrt{n}.\]
The integers $a$ and $b$ having the same parity, it follows that $a(a^2-1)$ and $b(a^2-3)$ are even, so that $x^3$ belongs to $\Bbb Z[\sqrt{n}]$.
\end{proof}

\begin{corollary}\label{cor2} Let $n$ be a positive, square-free integer. The degree of a parametric solution associated to a fundamental solution of the Pell equation
\[x^2-ny^2=1\]
is bounded by $6$. More precisely, for $n$ congruent to $2$ or $3$ modulo $4$ the degree is $1$ or $2$ while for $n$ congruent to $1$ modulo $4$ it is equal to $1,2,3$ or $6$.
\end{corollary}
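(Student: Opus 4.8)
The plan is to read the statement off directly from Theorem~\ref{th2} and Lemma~\ref{lemm2}. First I would observe that when $(a,b)$ is a \emph{fundamental} solution, the subgroup $V(a,b)$ generated by $-1$ and $a+b\sqrt n$ is, by the description of $V$ recalled just before Theorem~\ref{th2}, nothing but the full group $V$. Consequently Theorem~\ref{th2} asserts that a parametric solution of degree $d$ associated to $(a,b)$ exists if and only if $d$ divides $2(U:V)$; hence the set of possible degrees is exactly the set of divisors of $2(U:V)$, and the whole corollary reduces to computing this index.

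Next I would invoke Lemma~\ref{lemm2}: since $n$ is square-free, $U/V$ is cyclic of order dividing $3$, so $(U:V)\in\{1,3\}$ and therefore $2(U:V)\in\{2,6\}$. In either case $d$ divides $6$, which already gives the bound $d\le 6$. More precisely, the admissible degrees are $1,2$ when $(U:V)=1$ and $1,2,3,6$ when $(U:V)=3$.

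To get the refinement according to $n \bmod 4$ I would split into cases, using the classical fact that $\mathcal O_K=\Bbb Z[\sqrt n]$ precisely when $n\equiv 2$ or $3\pmod 4$. In that situation $U=V$, so $(U:V)=1$ and the degree is $1$ or $2$. When $n\equiv 1\pmod 4$, Lemma~\ref{lemm2} leaves the two possibilities $(U:V)=1$ and $(U:V)=3$, so the degree lies in $\{1,2,3,6\}$; conversely degrees $1$ and $2$ are realized unconditionally (Propositions~\ref{prop1} and~\ref{prop2}), while degrees $3$ and $6$ do occur whenever $(U:V)=3$, again by the equivalence in Theorem~\ref{th2}.

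I do not expect any genuine obstacle here: the substantive analytic and arithmetic work has already been carried out in Theorem~\ref{th1}, Theorem~\ref{th2} and Lemma~\ref{lemm2}, and the corollary is essentially an exercise in assembling these. The only points deserving a word of care are the identification $V(a,b)=V$ for fundamental solutions, and making clear that $\{1,2,3,6\}$ is being asserted as the exhaustive list of \emph{possible} degrees for $n\equiv 1\pmod 4$, rather than a claim that all four values arise for every such $n$.
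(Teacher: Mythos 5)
Your proposal is correct and follows exactly the route the paper takes: the paper's own proof is the single sentence that the corollary is an immediate consequence of Theorem~\ref{th2} combined with Lemma~\ref{lemm2}, and your write-up simply makes explicit the steps (the identification $V(a,b)=V$ for a fundamental solution, the case split on $n\bmod 4$ via $\mathcal O_K=\Bbb Z[\sqrt n]$) that the paper leaves implicit. No gaps.
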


\begin{proof} It is an immediate consequence of theorem~\ref{th2} combined with lemma~\ref{lemm2}.
\end{proof}

\end{document}